\newcommand{\F}{\mathbb{F}}
\newcommand{\Q}{\mathbb{Q}}
\newcommand{\Z}{\mathbb{Z}}
\DeclareMathOperator{\gsp}{GSp}
\newcommand{\OO}{\mathscr{O}}
\newcommand{\p}{\mathfrak{p}}
\DeclareMathOperator\Sp{Sp}
\DeclareMathOperator\mult{mult}
\DeclareMathOperator\sgn{sgn}
\DeclareMathOperator\disc{disc}
\DeclareMathOperator\cyc{cyc}
\DeclareMathOperator\ab{ab}
\theoremstyle{plain}
\newtheorem{theorem}{Theorem}[section]
\newtheorem{lemma}[theorem]{Lemma}
\theoremstyle{definition}
\newtheorem{definition}[theorem]{Definition}
\newtheorem*{theorem*}{Theorem}
\newtheorem*{proposition*}{Proposition}
\newtheorem*{lemma*}{Lemma}
\theoremstyle{remark}
\numberwithin{equation}{section}
\begin{document}

\title{An explicit Abelian Surface with Maximal Galois Action}

\author{Quinn Greicius} \address{Department of Mathematics, Stanford University, \mbox{Stanford, CA 94305}} 
\email{qrg@stanford.edu}
\author{Aaron Landesman} \address{Department of Mathematics, Stanford University, \mbox{Stanford, CA 94305}}
\email{aaronlandesman@gmail.com}

\begin{abstract}
	We construct an explicit example of a genus $2$ curve $C$ over a number field $K$ such that the adelic Galois representation arising from the action of $\operatorname{Gal}(\overline{K}/K)$ on the Jacobian of $C$ has image $\gsp_4(\widehat{\mathbb Z})$.
\end{abstract}
\maketitle

\section{Introduction}
Let $K$ be a number field and $A$ a principally polarized abelian variety of dimension $g$ over $K$.
For $n$ a positive integer, the action of $G_K:=\mathrm{Gal}(\overline{K}/K)$ on the $n$-torsion $A[n]$ preserves the symplectic form given by the Weil pairing and yields 
the \textit{mod-n} Galois representation $$\rho_{A,n}\colon G_K\to \gsp_{2g}(\Z/n\Z).$$
The inverse limit of the $\rho_{A,n}$ over $\Z/m\Z \rightarrow \Z/n\Z$ for $n \mid m$ forms the \textit{adelic} Galois representation
$$\rho_{A}\colon G_K\to \gsp_{2g}(\widehat{\Z}).$$
For $\ell$ a prime, the \textit{$\ell$-adic} Galois representation
$$\rho_{A,\ell^\infty}\colon G_K\to\gsp_{2g}(\Z_\ell)$$
is the composition of $\rho_{A}$ with the map $\gsp_{2g}(\widehat{\Z}) \rightarrow \gsp_{2g}(\Z_\ell)$.

There has been much recent interest in understanding the image of Galois representations.
One of the earliest results in this direction is Serre's Open Image Theorem ~\cite{serre:open-image-theorem}, which states that for an elliptic curve $E/K$ without complex multiplication, $\rho_E(G_K)$ is an open subgroup of $\gsp_{2}(\widehat{\Z})$. Serre's subsequent generalization of this result in ~\cite[Theorem 3]{serrelacs} implies that for $A$ as above, with odd dimension $g$ (or dimension $g=2$ or $6$) and $\operatorname{End}(A)\cong \Z$, $\rho_A(G_K)$ is open in $\gsp_{2g}(\widehat{\Z})$. 
Note that $\rho_A(G_K)$ is open in $\gsp_{2g}(\widehat{\Z})$ if and only if $\rho_{A,\ell^\infty}(G_K)$ is open in $\gsp_{2g}(\Z_\ell)$ for all $\ell$ and equal to $\gsp_{2g}(\Z_\ell)$ for all but finitely many $\ell$. In the dimension $1$ case, however, despite the fact that the Galois representation has open image, it turns out that if $K = \mathbb Q$, $\rho_E$ can never surject onto $\gsp_{2}(\widehat{\mathbb Z})$
\cite[Proposition 22]{serre:open-image-theorem}.
Nevertheless, it is possible that $\rho_E(G_K) = \gsp_{2}(\widehat{\mathbb Z})$ in the case $K \neq \mathbb Q$, and in~\cite{greiciusthesis}, A. Greicius constructs an example of such an $E$.
Furthermore, in~\cite{zywina2015example}, Zywina constructs an example of a non-hyperelliptic curve of genus $3$ over $\mathbb Q$ whose Jacobian has adelic Galois image equal to $\gsp_6(\widehat{\mathbb Z})$. 
Hence, while we do have examples of curves $C$ in genus $g = 1$ and $3$, 
with $\rho_{J(C)}(G_K) = \gsp_{2g}(\widehat{\mathbb Z})$, to the authors' knowledge,
no such example is known in the case $g = 2$. Indeed, there turn out to be significant obstacles in
genus $2$ faced neither in genus $1$ nor genus $3$.
The purpose of this note is to provide an example of such a genus $2$ curve, given in \autoref{theorem:polynomial}.
	
    The techniques used in the genus $1$ and $3$ cases appear not
to apply in the genus $2$ case: the genus $1$ techniques of 
~\cite{greiciusthesis}
do not apply because they use considerations specific to subgroups of
$\gsp_2(\mathbb F_p)$, while the genus $3$ techniques of \cite{zywina2015example} use results specific
to $\mathbb Q$, such as Serre's conjecture.
However, while there do exist curves over $\mathbb Q$ of every genus $g \geq 3$ whose Jacobian
has Galois representation with image equal to $\gsp_{2g}(\widehat{\mathbb Z})$ by \cite[Theorem 1.1]{landesman-swaminathan-tao-xu:lifting-symplectic-group},
there are no such curves over $\mathbb Q$ of genus $1$ or $2$ by \cite[Proposition 2.5]{zywina2015example}.
Therefore, in order to provide the desired example, we will need techniques applying over number fields $K \neq \mathbb Q$.
It is known that there {\em exist} curves of genus $2$ with Galois representation image equal to
$\gsp_{2g}(\widehat{\mathbb Z})$ over every number field $K \neq \mathbb Q$ so that $K \cap \mathbb Q^{\cyc} = \mathbb Q$ where $\mathbb Q^{\cyc}$ is the maximal
cyclotomic extension of $\mathbb Q$, as follows from \cite[Theorem 1.1]{landesman-swaminathan-tao-xu:rational-families}.
However, the proof there is non-constructive, and so does not lead to any concrete examples.

	There are several examples of curves of genus $2$ whose associated Galois representations have large image:
in~\cite[Theorem 5.4]{dieulefait:explicit-determination-of-the-images},
	Dieulefait gives an example of a
	genus-$2$ curve over $\mathbb Q$
	whose Jacobian has mod-$\ell$
	image equal to $\gsp_4(\mathbb Z/\ell \mathbb Z)$
	for $\ell \ge 5$, and in \cite[Theorem 1.3]{landesman-swaminathan-tao-xu:hyperelliptic-curves},
	the authors give an example of a genus $2$ curve over $\mathbb Q$ so that the associated Galois representation has image of index $2$ in
$\gsp_4(\widehat{\mathbb Z})$.

As mentioned above, by 
\cite[Proposition 2.5]{zywina2015example}
there are no genus $2$ curves over $\mathbb Q$
whose associated Galois representation has image equal
to
$\gsp_4(\widehat{\mathbb Z})$.
We briefly recall the group-theoretic reason for this:
Since $\rho_{J(C)}(G_{\mathbb Q^{\cyc}}) = \rho_A(G_{\mathbb Q}) \cap \Sp_4(\widehat{\mathbb Z})$,
$\rho_{J(C)}(G_{\mathbb Q^{\ab}}) = [\rho_{J(C)}(G_{\mathbb Q}), \rho_{J(C)}(G_{\mathbb Q})]$,
and $\mathbb Q^{\cyc} = \mathbb Q^{\ab}$, we obtain
\begin{align*}
    \rho_A(G_{\mathbb Q}) \cap \Sp_4(\widehat{\mathbb Z}) = [\rho_{J(C)}(G_{\mathbb Q}), \rho_{J(C)}(G_{\mathbb Q})].
\end{align*}
If there were a curve $C$ over $\mathbb Q$ with
$\rho_{J(C)}(G_{\mathbb Q}) = \gsp_4(\widehat{\mathbb Z})$,
the above would imply that the commutator of $\gsp_4(\widehat{\mathbb Z})$ contains all of $\Sp_4(\widehat{\mathbb Z})$.
However, this is false, as can even be checked $\mod 2$
because $\gsp_4(\mathbb Z/2\mathbb Z) \simeq \Sp_4(\mathbb Z/2 \mathbb Z) \simeq S_6$,
which has commutator of index $2$.

The group theoretic obstruction to adelic surjectivity of Galois representations of genus $2$ curves 
from 
\cite[Proposition 2.5]{zywina2015example} 
described
above disappears over number fields $K$ with $K^{\cyc}$ of even index in $K^{\ab}$.
Despite this, prior to this paper, we could not find any examples in the literature of genus $2$ curves over nontrivial extensions $K/\mathbb Q$ with adelic image equal to all of $\gsp_4(\widehat{\mathbb Z})$. The critical new ingredient that enables our explicit construction of a curve $C$ whose associated Galois representation is surjective comes from ~\cite{anni-dokchitser:hyperelliptic}, where Anni and Dokchitser give strong control over the image of the mod $\ell$ representations in terms of the reduction of $C$ at various primes of $\mathscr{O}_K$. Using these techniques, we obtain the following result:
\begin{theorem}\label{theorem:polynomial} 
Let $K=\Q(\alpha)$, where $\alpha^3+\alpha+1=0$
and let $C$ be the genus $2$ hyperelliptic curve which is the regular projective completion of the affine curve $y^2=f(x)$, where $f(x)\in \OO_K[x]$ is the polynomial given by
\begin{align*}f(x):= \hspace{.1cm} &x^6 -1255129022x^5 + 213499328x^4 - 739544064x^3 -1479402560x^2 \\&+938024640x - 486022320+ 85534400\alpha+54644800\alpha^2.\end{align*}
Then $\rho_{J(C)}(G_K)=\gsp_4(\widehat{\Z})$.
\end{theorem}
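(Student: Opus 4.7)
The plan is to prove the theorem in four stages: first show that the image of $\rho_{J(C)}$ is open in $\gsp_4(\widehat{\Z})$, then verify mod-$\ell$ surjectivity for every prime $\ell$, then lift each mod-$\ell$ surjection to an $\ell$-adic one, and finally assemble the $\ell$-adic pieces into the full adelic statement.

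For openness, I would first establish $\End_{\overline{K}}(J(C)) = \Z$ by choosing a prime $\p \subset \OO_K$ of small norm at which $C$ has good reduction, computing the characteristic polynomial of $\Frob_\p$ on the Tate module, and verifying that this polynomial is irreducible and that its splitting field is not of CM type; Tate's theorem then implies that the endomorphism ring of the reduction, and hence of $J(C)$, is $\Z$. Serre's theorem \cite[Theorem~3]{serrelacs}, applicable in genus $g=2$ with trivial geometric endomorphism ring, then yields that $\rho_{J(C)}(G_K)$ is open in $\gsp_4(\widehat{\Z})$, so $\rho_{J(C),\ell^\infty}(G_K) = \gsp_4(\Z_\ell)$ for all primes $\ell$ outside a finite bad set.

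The core of the argument is mod-$\ell$ surjectivity at the remaining bad primes. For each such $\ell$ I would apply the criteria of Anni and Dokchitser \cite{anni-dokchitser:hyperelliptic}: data from the reduction of $C$ at carefully chosen primes $\p \subset \OO_K$ yield explicit elements of $\rho_{J(C),\ell}(G_K)$ --- characteristic polynomials of Frobenius at primes of good reduction, and transvection-type elements from tame inertia at primes of bad reduction. These elements must be shown to generate $\gsp_4(\F_\ell)$ by ruling out each of its maximal subgroups (reducible, imprimitive, of small field of definition, and primitive almost-simple) in turn. For $\ell = 2$ the task reduces to verifying $\gal(f/K) = S_6 \cong \gsp_4(\F_2)$, a finite Galois-theoretic computation on $f$. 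Throughout, one needs the mod-$\ell$ similitude character $G_K \to \F_\ell^\times$, which is the mod-$\ell$ cyclotomic character, to be surjective; this is precisely why $K$ is chosen to be a non-Galois cubic, since its only abelian subextension of $\Q$ is $\Q$ itself, forcing $K \cap \Q^{\cyc} = \Q$.

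I expect the main obstacle to be the primes $\ell = 2$ and $\ell = 3$, where $\gsp_4(\F_\ell)$ has the richest collection of maximal subgroups and where $f$ must be engineered so that its reductions at primes of $\OO_K$ collectively rule out each of them. The mod-$\ell$-to-$\ell$-adic lift is automatic for $\ell \ge 5$ (using that $\Sp_4(\F_\ell)$ is perfect and has no small subgroups) but must be checked by a further Frattini-type computation at $\ell = 2, 3$. Once $\rho_{J(C),\ell^\infty}(G_K) = \gsp_4(\Z_\ell)$ is established for every $\ell$, the adelic statement follows from a Goursat-style argument: because the similitude characters for distinct primes $\ell$ have pairwise coprime image and because $\Sp_4(\Z_\ell)$ coincides with its own commutator subgroup, the only closed subgroup of $\prod_\ell \gsp_4(\Z_\ell)$ surjecting onto each factor is the full product, so $\rho_{J(C)}(G_K) = \gsp_4(\widehat{\Z})$.
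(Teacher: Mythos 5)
Your outline captures the Anni--Dokchitser workhorse correctly, and your observation that a non-Galois cubic $K$ forces $K\cap\Q^{\cyc}=\Q$ is exactly the mechanism the paper exploits for surjectivity of the similitude character. But your final ``Goursat-style'' assembly step contains a genuine error, and it is precisely the error that makes genus $2$ hard. You assert that ``$\Sp_4(\Z_\ell)$ coincides with its own commutator subgroup,'' and conclude that the only closed subgroup of $\prod_\ell\gsp_4(\Z_\ell)$ surjecting onto each factor is the full product. This is false at $\ell=2$: since $\Sp_4(\F_2)\cong S_6$ has $\Z/2\Z$ as a quotient, $\Sp_4(\Z_2)$ is not perfect, and the abelianization of $\gsp_4(\widehat\Z)$ is $\{\pm1\}\times\widehat\Z^\times$, where the extra $\{\pm1\}$ is the sign character through $\gsp_4(\F_2)\cong S_6$. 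There are proper closed subgroups of $\gsp_4(\widehat\Z)$ that surject onto every $\ell$-adic factor --- take the preimage of a graph in $\{\pm1\}\times\widehat\Z^\times$ of a nontrivial character $\widehat\Z^\times\to\{\pm1\}$. So having $\rho_{J(C),\ell^\infty}(G_K)=\gsp_4(\Z_\ell)$ for every $\ell$ does \emph{not} give $\rho_{J(C)}(G_K)=\gsp_4(\widehat\Z)$.

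What you must add --- and what the paper proves in \autoref{lemma:reduction}--\autoref{lemma:modl} --- is that $(\sgn,\mult)\circ\rho_{J(C)}\colon G_K\to\{\pm1\}\times\widehat\Z^\times$ is \emph{jointly} surjective. Since $\sgn$ is controlled by whether $\sqrt{\disc f}$ is fixed and $\mult$ is the cyclotomic character, this amounts to showing $\sqrt{\disc f}\notin K^{\cyc}$, which over $\Q$ always fails (this is exactly why $\rho_{E}$ is never surjective over $\Q$, and likewise for genus $2$, per Zywina's Proposition 2.5) but over $K=\Q(\alpha)$ can be verified by a ramification computation at $3$. Without this step, the proof does not close. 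Two smaller points: your opening appeal to Serre's openness theorem is non-effective and hence cannot by itself delimit the finite set of potentially bad $\ell$, so you would need Anni--Dokchitser for that anyway; and the mod-$\ell$-to-$\ell$-adic lift is in fact automatic at $\ell=2,3$ as well (the cited lifting theorems for $\Sp_{2g}$, $g\ge2$, cover all primes), so no separate Frattini computation is required there.
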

The remainder of the paper is devoted to proving \autoref{theorem:polynomial}.
We now outline its proof.
In \autoref{adicsection}, 
we reduce the problem of computing $\rho_{J(C)}(G_K)$ 
to showing 
$\rho_{J(C),\ell}(G_K)\supseteq \mathrm{Sp}_4(\Z/\ell\Z)$. In \autoref{modlsection} we apply the results of \cite{anni-dokchitser:hyperelliptic}
to give a criterion 
to show $\rho_{J(C),\ell}(G_K)\supseteq \mathrm{Sp}_4(\Z/\ell\Z)$ for all primes $\ell$ not in the finite set $\left\{ 2,3,5,17 \right\}$. Finally, in
\autoref{proofsection}, we verify the conditions of the criterion from \autoref{modlsection}
and then check that $\rho_{J(C),\ell}(G_K)\supseteq \mathrm{Sp}_4(\Z/\ell\Z)$
at each of the remaining primes $\ell \in \left\{ 2,3,5,17 \right\}$.

\subsection*{Acknowledgements}
We would like to thank Aaron Greicius and David Zureick-Brown for originally suggesting the problem and for providing comments on the manuscript.
We also thank Brian Conrad and Jackson Morrow for helpful comments.
We thank 
Samuele Anni,
Vladimir Dokchitser,
Zev Rosengarten,
Jesse Silliman,
Ashvin Swaminathan, James Tao, and Yujie Xu for beneficial conversations. Special thanks is due to the anonymous referee for their exceptionally attentive suggestions.
This material is based upon work supported by the 
National Science Foundation Graduate Research Fellowship Program under Grant No.~DGE-1656518.

\section{Reducing the problem of adelic surjectivity}\label{adicsection}
In this section, for $C$ the curve from \autoref{theorem:polynomial}, we reduce the problem of showing that 
$\rho_{J(C)}(G_K)=\gsp_4(\widehat{\Z})$ to verifying
$\rho_{J(C),\ell}(G_K)\supseteq\Sp_4(\Z/\ell\Z)$ for all primes $\ell$.
This is accomplished in \autoref{lemma:modl}.
The key result in attaining this reduction is \autoref{lemma:reduction}, an 
analogue of \cite[Theorem 3.1]{greiciusthesis} for $\gsp_4(\widehat{\Z})$ in place of $\gsp_2(\widehat{\Z})$. 

Before stating \autoref{lemma:reduction}, we introduce some notation.
From the identification $\gsp_4(\widehat{\Z})=\prod_{\ell}\gsp_4(\Z_\ell)$, denote by $\pi_\ell\colon \gsp_4(\widehat{\Z})\to \gsp_4(\Z_\ell)$ the projection onto the $\ell$-adic factor. 
Let $\mult\colon  \gsp_4(\widehat{\Z}) \rightarrow \widehat{\Z}^\times$ denote the mult map from the definition of $\gsp$. Then we define $\Sp_4(\widehat{\Z}) := \ker(\mult)$. Also, recalling the identification $\gsp_4(\Z/2\Z) \simeq S_6$,
let $\sgn\colon  \gsp_4(\widehat{\Z}) \rightarrow \left\{ \pm 1 \right\}$ denote the composition of the reduction $\bmod$-$2$
$\Phi_2\colon \gsp_4(\widehat{\Z}) \rightarrow \gsp_4(\Z/2\Z)$ with the usual sign map $\gsp_4(\Z/2\Z) \simeq S_6 \rightarrow \left\{ \pm 1 \right\}$.
\begin{lemma}\label{lemma:reduction} Let $H\subseteq \gsp_4(\widehat{\Z})$ be a closed subgroup such that: \begin{enumerate} \item $\pi_\ell(H)\supseteq\mathrm{Sp}_4(\Z_\ell)$ for all $\ell$.
	\item The map $(\sgn, \mult)\colon  \gsp_4(\widehat{\Z}) \rightarrow \left\{ \pm 1 \right\} \times \widehat{\Z}^\times$ is surjective when restricted to $H$.
\end{enumerate}
Then $H=\gsp_4(\widehat{\Z})$.
\end{lemma}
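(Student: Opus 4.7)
The plan is to reduce everything to showing $\Sp_4(\widehat{\Z}) \subseteq H$, and then close the argument using condition (2). Since $\mult$ identifies $\gsp_4(\widehat{\Z})/\Sp_4(\widehat{\Z})$ with $\widehat{\Z}^\times$ and condition (2) forces $\mult(H) = \widehat{\Z}^\times$, any closed subgroup $H$ that contains $\Sp_4(\widehat{\Z})$ and surjects modulo it must be all of $\gsp_4(\widehat{\Z})$. So the entire task is to verify $\Sp_4(\widehat{\Z}) \subseteq H$.

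First I would upgrade condition (1) using (2): since $\mult(\pi_\ell(H)) = \Z_\ell^\times$ (as the $\ell$-component of $\widehat{\Z}^\times$) and $\pi_\ell(H) \supseteq \Sp_4(\Z_\ell) = \ker(\mult|_{\gsp_4(\Z_\ell)})$, it follows that $\pi_\ell(H) = \gsp_4(\Z_\ell)$ for each prime $\ell$. Next, set $H' := H \cap \Sp_4(\widehat{\Z})$, and I would prove $\pi_\ell(H') = \Sp_4(\Z_\ell)$ for every $\ell$. The closed commutator subgroup $\overline{[H,H]}$ lies in $\Sp_4(\widehat{\Z})$ (because $\mult$ is abelian-valued) and therefore in $H'$, and its $\ell$-projection equals $\overline{[\gsp_4(\Z_\ell), \gsp_4(\Z_\ell)]}$. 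For odd $\ell$ this equals $\Sp_4(\Z_\ell)$ by topological perfectness of $\Sp_4(\Z_\ell)$, finishing the odd primes. At $\ell = 2$ the commutator only delivers the index-$2$ subgroup $\Phi_2^{-1}(A_6) \cap \Sp_4(\Z_2)$ since $[S_6, S_6] = A_6$; here the $\sgn$ component of (2) produces $h \in H$ with $\mult(h) = 1$ and $\sgn(h) = -1$, which lies in $H'$ and supplies the missing coset.

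Finally, with $\pi_\ell(H') = \Sp_4(\Z_\ell)$ in hand, I would conclude $H' = \Sp_4(\widehat{\Z}) = \prod_\ell \Sp_4(\Z_\ell)$ via a Goursat-type argument for products of profinite groups. The crucial input is that for distinct primes $\ell \neq \ell'$, the profinite groups $\Sp_4(\Z_\ell)$ and $\Sp_4(\Z_{\ell'})$ share no nontrivial finite simple quotient: the simple quotients of $\Sp_4(\Z_\ell)$ are $\mathrm{PSp}_4(\F_\ell)$ (respectively $A_6$ when $\ell = 2$) together with $\ell$-power simple quotients from the pro-$\ell$ congruence kernel, and these are pairwise disjoint across primes on order grounds. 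This disjointness forces any closed subgroup with full projections to be the whole product.

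The main obstacle I anticipate is the infinite-product Goursat step, which requires careful setup in the profinite context, together with the delicate handling of $\ell = 2$ where $\Sp_4(\F_2) = S_6$ fails to be perfect and the $\sgn$ hypothesis of (2) is precisely what bridges the resulting gap. The passage between $H$, $H'$, and $\gsp_4(\widehat{\Z})$ after the Goursat step is then a routine coset chase.
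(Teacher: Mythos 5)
Your plan is correct in substance, and it reaches the same destination via a visibly different packaging of the same underlying ingredients. The paper's proof first identifies $(\sgn,\mult)$ as the abelianization of $\gsp_4(\widehat{\Z})$ (via O'Meara's computation of the derived subgroup), reduces to a \emph{maximal} closed proper subgroup $H$, and then invokes a Goursat-type lemma (\cite[Proposition 2.5]{greiciusthesis}) together with the disjointness of nonabelian simple quotients of the factors $\gsp_4(\Z_\ell)$ to conclude that any such $H$ has proper image in the abelianization, contradicting (2). You instead work directly inside $\Sp_4(\widehat{\Z})$: you pass to $H' = H \cap \Sp_4(\widehat{\Z})$, derive $\pi_\ell(H') = \Sp_4(\Z_\ell)$ from the closed commutator (using topological perfectness of $\Sp_4(\Z_\ell)$ for $\ell \ge 3$) plus an explicit element from $\sgn$ at $\ell=2$, run the Goursat argument on the product $\prod_\ell \Sp_4(\Z_\ell)$ directly, and then do the final coset chase with $\mult$. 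Both routes rest on identical inputs --- the structure of $[\gsp_4(\widehat{\Z}),\gsp_4(\widehat{\Z})]$, the pairwise disjointness of finite simple quotients across $\ell$, and condition (2) to recover the abelian data --- but yours keeps the Goursat bookkeeping explicit whereas the paper hides it behind the maximal-subgroup reduction and the cited proposition. One small imprecision in your write-up: the parenthetical "together with $\ell$-power simple quotients from the pro-$\ell$ congruence kernel" is misleading, since for odd $\ell$ the group $\Sp_4(\Z_\ell)$ is perfect and therefore has no $\ell$-power (abelian) simple quotients at all, so the only finite simple quotient is $\mathrm{PSp}_4(\F_\ell)$; at $\ell=2$ the extra simple quotients $A_6$ and $\Z/2$ come from $\Sp_4(\F_2) \simeq S_6$, not from the congruence kernel. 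This does not damage the argument, but the phrasing should be tightened if you carry it into a full proof.
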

\begin{proof}
Let $G:=[\gsp_4(\widehat{\Z}),\gsp_4(\widehat{\Z})]$ be the derived subgroup of $\gsp_4(\widehat{\Z})$.
By \cite[3.3.6]{omeara1978symplectic} (see also \cite[Lemma 3.4]{landesman-swaminathan-tao-xu:hyperelliptic-curves}), we have $$G=\Phi_2^{-1}(A_6) \cap \Sp_4(\widehat{\Z}).$$ 
Because the kernel of $(\sgn, \mult)$ is precisely $G$, we conclude $(\sgn, \mult)\colon  \gsp_4(\widehat{\Z}) \rightarrow \left\{ \pm 1 \right\} \times \widehat{\Z}^\times$ is
the abelianization map.
Suppose $H \neq \gsp_4(\widehat{\Z})$. Then by \cite[Lemma 2.2]{greiciusthesis} 
we may assume that $H$ is a maximal closed subgroup. Since the $\mathrm{mult}$ map is surjective, condition (1) 
implies that $\pi_\ell(H)=\gsp_4(\Z_\ell)$. By \cite[Lemma 2.3]{landesman-swaminathan-tao-xu:rational-families} the factors $\gsp_4(\Z_\ell)$ 
have no finite simple nonabelian quotients in 
common. 
Hence, \cite[Proposition 2.5]{greiciusthesis} implies that the image of $H$ in the abelianization $\{\pm 1\}\times \widehat{\Z}^\times$ 
is a proper subgroup. This contradicts (2), and so we must in fact have
$H = \gsp_4(\widehat{\Z})$.
\end{proof}
In order to verify (2) above, we record the following useful criterion, whose proof is completely analogous to that given in
\cite[Theorem 3.1]{greiciusthesis}.
\begin{lemma}
	\label{lemma:sgn-mult-surjection}
	Suppose $D$ is a hyperelliptic curve which is the regular projective completion of the affine curve $y^2 = h(x)$ defined over a number field $L$, which is
	degree $3$ over $\mathbb Q$.
	Then, $(\sgn, \mult) \circ \rho_{J(D)}\colon  G_L \rightarrow \left\{ \pm 1 \right\} \times \widehat{\Z}^\times$
	is surjective if 
	$L \cap \mathbb Q^{cyc}=\mathbb Q$ and $ \disc h$ is not of the form $k^2 q$ for $k \in L, q\in \mathbb Q$.
\end{lemma}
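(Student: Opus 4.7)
The plan is to identify the two coordinates of $(\sgn, \mult) \circ \rho_{J(D)}$ as the natural characters they ought to be---the quadratic character of $L(\sqrt{\disc h})/L$ and the adelic cyclotomic character---and then to argue that a closed subgroup of $\{\pm 1\} \times \widehat{\Z}^\times$ which surjects onto the second factor must be either the full group or the graph of a continuous homomorphism $\widehat{\Z}^\times \to \{\pm 1\}$. The two hypotheses of the lemma are arranged precisely to rule out the latter, graph, possibility.

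The first identification comes from the standard compatibility of the Weil pairing on $J(D)[n]$ with Galois: this forces $\mult \circ \rho_{J(D)}$ to equal the adelic cyclotomic character $\chi_{\cyc}\colon G_L \to \widehat{\Z}^\times$. For the second, under the identification $\gsp_4(\F_2) \cong S_6$ the mod-$2$ representation $\rho_{J(D),2}$ is the permutation representation of $G_L$ on the six Weierstrass points of $D$ (the six roots of $h$ if $\deg h = 6$, or the five roots together with $\infty$ if $\deg h = 5$, viewed in $S_6$ via $S_5 \hookrightarrow S_6$), so that $\sgn \circ \rho_{J(D)}$ is the quadratic character $\chi_{\disc h}\colon G_L \to \{\pm 1\}$ cutting out $L(\sqrt{\disc h})/L$. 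The image of $\chi_{\cyc}$ is $\Gal(L \cdot \Q^{\cyc}/L) \cong \Gal(\Q^{\cyc}/L \cap \Q^{\cyc})$, which equals all of $\widehat{\Z}^\times$ by the first hypothesis.

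Hence the image $H \subseteq \{\pm 1\} \times \widehat{\Z}^\times$ of $(\sgn, \mult) \circ \rho_{J(D)}$ already projects onto the second factor. A quick argument---using that $H \cap (\{\pm 1\} \times \{1\})$ is either trivial or all of $\{\pm 1\} \times \{1\}$---shows that if $H$ is proper it must be the graph of a continuous character $\psi\colon \widehat{\Z}^\times \to \{\pm 1\}$, giving an identity $\chi_{\disc h} = \psi \circ \chi_{\cyc}$ and, in particular, $L(\sqrt{\disc h}) \subseteq L \cdot \Q^{\cyc}$. Via the isomorphism $\Gal(L \cdot \Q^{\cyc}/L) \cong \Gal(\Q^{\cyc}/\Q)$ the quadratic extension $L(\sqrt{\disc h})/L$ descends to a quadratic subextension $F/\Q$ of $\Q^{\cyc}$. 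By Kronecker--Weber, $F = \Q(\sqrt q)$ for some $q \in \Q^\times$, so $L(\sqrt{\disc h}) = L(\sqrt q)$ and $\disc h = k^2 q$ for some $k \in L$. This is precisely the form excluded by the second hypothesis, giving the required contradiction.

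The only step that is not purely formal is the descent through Kronecker--Weber in the preceding paragraph: because $L \cap \Q^{\cyc} = \Q$, quadratic subextensions of $L \cdot \Q^{\cyc}/L$ are in natural bijection with quadratic subfields of $\Q^{\cyc}/\Q$, and the latter are classified by Kronecker--Weber. Everything else is a formal comparison of two characters.
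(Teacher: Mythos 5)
Your proof is correct and follows essentially the same path as the paper's: identify $\mult\circ\rho_{J(D)}$ with the cyclotomic character and $\sgn\circ\rho_{J(D)}$ with the quadratic character of $L(\sqrt{\disc h})/L$, reduce joint surjectivity to $\sqrt{\disc h}\notin L\cdot\Q^{\cyc}$, and then deduce that containment would force $\disc h = k^2 q$. Your graph-of-a-character framing and the explicit appeal to Kronecker--Weber are a more structured way of saying what the paper says directly; the one step you assert without proof---that $L(\sqrt{\disc h}) = L(\sqrt q)$ implies $\disc h = k^2 q$ for some $k\in L$---is exactly the short Kummer-theory computation the paper writes out (squaring $\sqrt{\disc h} = a + b\sqrt q$ and using $[L:\Q]=3$ to rule out $\sqrt q\in L\setminus\Q$), so you should include it.
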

\begin{proof}
	First, we show $\mult \circ \rho_{J(D)}$ is surjective when $L \cap \mathbb Q^{\cyc}=\mathbb Q$.
Recall that the symplectic form on $J(D)[\ell]$ is the Weil pairing, so the composition $\mathrm{mult}\circ \rho_{J(D)}$ is identified with the cyclotomic character. Then since $L\cap \Q^{\mathrm{cyc}}=\Q$, the composition $\mathrm{mult}\circ \rho_{J(D)}$ is surjective because the cyclotomic character is surjective.

We next wish to show the joint map $\left( \sgn, \mult \right)$ is surjective. Given that $\mult$ is surjective,
to show $\left( \sgn, \mult \right)$ is surjective, we claim it suffices to verify
$\sqrt{\disc(h)} \notin L^{\cyc},$ for $L^{\cyc}$ the compositum $L \mathbb Q^{\cyc}$.
Indeed, because the two-torsion of $J(D)$ is generated by differences of Weierstrass points of $D$,
for $\sigma \in G_L$, $\sgn(\sigma)= 1$ if and only if $\sigma$ acts as an even permutation on the $6$ Weierstrass points of $D$.
As $\sqrt{\disc(h)}$ is a multiple of the differences of the Weierstrass points, $\sgn(\sigma) = 1$ if and only if $\sqrt{\disc(h)}$ is fixed by $\sigma$.
So, in order to show $(\sgn, \mult) \circ \rho_{J(D)}\colon  G_L \rightarrow \left\{ \pm 1 \right\}\times \widehat{\Z}^\times$ is jointly surjective, it suffices to show the kernel of $(\sgn,\mult) \circ \rho_{J(D)}$ is strictly contained in $\ker(\mult \circ \rho_{J(D)}) = \mathrm{Gal}(\overline{L}/L^{\cyc})$. 
Since $\ker(\sgn \circ \rho_{J(D)}) = \mathrm{Gal}(\overline{L}/L(\sqrt {\disc h}))$, we only need verify $\sqrt{\disc h} \notin L^{\cyc}$.

To conclude the proof, we only need to show that if $\disc (h)$
is not of the form $k^2 q$ for $k \in L$ and $q \in \mathbb Q$, then $\sqrt{\disc (h)} \notin L^{\cyc}$.
Indeed, any quadratic extension of $L$ contained in $L^{\cyc}$ is necessarily of the form $L(\sqrt{q})$ for $q \in \mathbb Q$.
Therefore, if $L(\sqrt{\disc (h)}) \subset L^{\cyc}$ we obtain $\sqrt{\disc (h)}\in L(\sqrt q)$ for some $q \in \mathbb Q$.
This implies $\sqrt{\disc (h)}=a + b \sqrt q$ for $a,b \in L$. Since $\disc (h) \in L$, and $L$ has degree $3$ over $\mathbb Q$, if $\sqrt q \in L$,
we must have $\sqrt q \in \mathbb Q$.
This yields either $a=0$ or $b =0$, and so
$\disc (h) = k^2 q$ for $k \in L, q \in \mathbb Q$.
\end{proof}

We now use \autoref{lemma:reduction} and \autoref{lemma:sgn-mult-surjection}
to recover the behavior of the adelic representation from the mod-$\ell$ representations:
\begin{lemma}\label{lemma:modl} Let $C$ be the curve defined in \autoref{theorem:polynomial}, and suppose the associated mod-$\ell$ representations satisfy $\rho_{J(C),
	\ell}(G_K)\supseteq\Sp_4(\Z/\ell\Z)$ for all primes $\ell$. Then $\rho_{J(C)}(G_K)=\gsp_4(\widehat{\Z})$.
\end{lemma}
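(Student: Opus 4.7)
The plan is to apply \autoref{lemma:reduction} to the closed subgroup $H := \rho_{J(C)}(G_K) \subseteq \gsp_4(\widehat{\Z})$, after which the desired equality $H = \gsp_4(\widehat{\Z})$ is immediate. Thus the proof reduces to verifying the two hypotheses of \autoref{lemma:reduction}, namely the $\ell$-adic containment $\pi_\ell(H) \supseteq \Sp_4(\Z_\ell)$ for every prime $\ell$, and the joint surjectivity of $(\sgn, \mult)$ on $H$.

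For hypothesis (1), we must upgrade the assumed mod-$\ell$ containment $\rho_{J(C),\ell}(G_K) \supseteq \Sp_4(\Z/\ell\Z)$ to $\pi_\ell(H) \supseteq \Sp_4(\Z_\ell)$. This is a standard lifting statement: any closed subgroup of $\gsp_4(\Z_\ell)$ whose reduction modulo $\ell$ contains $\Sp_4(\F_\ell)$ already contains $\Sp_4(\Z_\ell)$. For $\ell \geq 5$ this is a quick Frattini/Nakayama argument, using that $\Sp_4(\F_\ell)$ is perfect and that the kernel of $\Sp_4(\Z_\ell) \twoheadrightarrow \Sp_4(\F_\ell)$ is a pro-$\ell$ group with an $\mathfrak{sp}_4$-style filtration on which $\Sp_4(\F_\ell)$ acts irreducibly. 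For $\ell \in \{2,3\}$ the same conclusion holds by a more delicate analysis of the same filtration at the bottom graded pieces.

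For hypothesis (2), we apply \autoref{lemma:sgn-mult-surjection} with $L = K$ and $h = f$; note that $[K:\Q] = 3$ as required. Two subconditions must be checked. First, $K \cap \Q^{\cyc} = \Q$: the discriminant of $x^3+x+1$ is $-31$, which is not a rational square, so the Galois closure of $K/\Q$ has group $S_3$ and $K$ is itself non-Galois over $\Q$. Since every subfield of $\Q^{\cyc}$ is abelian over $\Q$, the intersection $K \cap \Q^{\cyc}$ is a proper subfield of $K$, and the primality of $[K:\Q] = 3$ then forces this intersection to be $\Q$. Second, we must check that $\disc(f)$ is not of the form $k^2 q$ for $k \in K$ and $q \in \Q$, which is a direct calculation using the explicit polynomial from \autoref{theorem:polynomial}.

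The main obstacle is this final discriminant verification. Our strategy is to compute $\disc(f) \in \OO_K$, factor the principal ideal $(\disc f)$ in $\OO_K$, and exhibit a rational prime $p$ unramified in $K$ together with two primes $\p, \p'$ of $\OO_K$ lying over $p$ such that $v_\p(\disc f)$ and $v_{\p'}(\disc f)$ have different parities. Any element of $K^{\times 2} \cdot \Q^\times$ has valuations of the same parity at all primes of $\OO_K$ lying over a given unramified rational prime, so the existence of such a $p$ immediately rules out $\disc(f)$ being of the forbidden form and completes the verification of hypothesis (2).
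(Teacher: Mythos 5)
Your proposal is correct and follows essentially the same route as the paper: upgrade the mod-$\ell$ hypothesis to $\ell$-adic containment via a lifting result, then verify the hypotheses of \autoref{lemma:reduction} through \autoref{lemma:sgn-mult-surjection}, ruling out $\disc(f) = k^2q$ by a parity argument on valuations at primes above an unramified rational prime (the paper carries this out explicitly at $p=3$, where $(3)=\p_3\mathfrak q_3$ with $v_{\p_3}(\disc f)=1$ and $v_{\mathfrak q_3}(\disc f)=0$). One caution on your lifting step: the paper cites \cite{weigel:on-the-profinite-completion-of-arithmetic-groups-of-split-type}, \cite{vasiu2003surjectivity}, and \cite{landesman-swaminathan-tao-xu:lifting-symplectic-group}, which cover all $\ell$ including the small ones; your Frattini sketch is the right idea, but keep in mind that $\Sp_4(\F_2)\cong S_6$ is not perfect, and since your $H$ lives in $\gsp_4(\Z_\ell)$ rather than $\Sp_4(\Z_\ell)$ one should first pass to the closed derived subgroup (or intersect with $\Sp_4$) before invoking the $\Sp$-lifting statement, so leaning on the cited references is the safer route.
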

\begin{proof}

	By \cite[Theorem B]{weigel:on-the-profinite-completion-of-arithmetic-groups-of-split-type} (see also \cite[Theorem 1.3]{vasiu2003surjectivity} and \cite[Theorem 1]{landesman-swaminathan-tao-xu:lifting-symplectic-group})
no proper subgroup of $\mathrm{Sp}_4(\Z_\ell)$ can surject onto $\mathrm{Sp}_4(\Z/\ell\Z)$ under reduction mod $\ell$, so the assumption that $\rho_{J(C),\ell}(G_K)\supseteq \Sp_4(\Z/\ell\Z)$ implies that $\pi_\ell(\rho_{J(C),\ell^\infty}(G_K)) \supseteq \Sp_4(\Z_\ell)$. 

Hence, by \autoref{lemma:reduction}, in order to complete the proof, we only need verify that the $(\sgn,\mult)$ map is surjective.
By \autoref{lemma:sgn-mult-surjection}, it suffices to check
$K \cap \mathbb Q^{\cyc} =\mathbb Q$ and $\sqrt {\disc f}$ is not of the form $k^2 q$ for $k \in K, q \in \mathbb Q$.

First, we check
$K \cap \mathbb Q^{\cyc} =\mathbb Q$.
Suppose for the sake of contradiction that 
$K \cap \mathbb Q^{\cyc} \neq \mathbb Q$.
Because $[K:\mathbb Q]=3$ is prime, 
$K \cap \mathbb Q^{\cyc} \neq \mathbb Q$ implies $K\cap \mathbb Q^{\cyc}=K$. 
This would imply $K/\mathbb Q$ is an abelian extension and hence Galois,
contradicting that $K$ is not Galois over $\mathbb Q$. 

To conclude the proof, we only need to check $\disc (f)$ is not of the form $k^2 q$ for $k \in K, q \in \mathbb Q$.
Indeed, in $K$, $(3)$ factors as $(3)=\p_3\mathfrak{q}_3$ with $\p_3 \neq \mathfrak q_3$ and $\p_3 \mid (\mathrm{disc}(f))$, $\p_3^2\nmid (\mathrm{disc}(f))$, and $\mathfrak{q}_3\nmid \mathrm{disc}(f)$. So, if $\mathrm{disc}(f)=k^2q$ for some $k\in K$ and $q\in \Q$, comparing the exponents of primes dividing $(3)$, we get $\p_3=(\p_3^a\mathfrak{q}_3^b)^2(\p_3\mathfrak{q}_3)^c$ for some integers $a,b,c$.
Comparing powers of $\mathfrak{q}_3$ yields $-2b=c$, so $c$ is even. 
However, by comparing powers of $\p_3$, 
this would imply $2a + c$ is even and also equal to $1$, a contradiction.\end{proof}

It therefore remains to show that the image of the representations $\rho_{J(C),\ell}$ contain $\mathrm{Sp}_4(\Z/\ell\Z)$ for all $\ell$.
\section{Controlling the mod-$\ell$ representations}\label{modlsection}

A sufficient condition for surjectivity at odd primes is given in \autoref{surjective}. To state it, we first define the relevant terminology.
\begin{definition}\label{definition:primitive} Let $V$ be a symplectic vector space over a field $k$, and let $G$ be a subgroup of $\gsp(V)$. We say that $\{V_1,\ldots, V_k\}$ is a \textit{non-trivial} $G$-\textit{stable decomposition} of $V$ into symplectic subspaces if the $V_i$ are proper symplectic subspaces $V_i \subset V$ with $V = \bigoplus_{i=1}^kV_i$, the symplectic pairing is non-degenerate on $V_i$, and there is a homomorphism $\phi \colon G\to S_k$ such that $\sigma(V_i)=V_{\phi(\sigma)(i)}$ for $\sigma \in G$. If no such decomposition exists, $V$ is said to be \textit{primitive}.
\end{definition}
\begin{definition}\label{definition:transvection} An element $\sigma\in \gsp(V)$ is called a \textit{transvection} if $\sigma$ is unipotent (has all eigenvalues equal to $1$) and $\sigma-I$ has rank $1$. \end{definition}
\begin{theorem}[\protect{\cite[Theorem 1.1]{hall2008big}, \cite[Proposition 2.2]{zywina2015example}}]\label{surjective}
Let $\ell$ be an odd prime. If the mod-$\ell$ representation $\rho_{J(C),\ell}(G_K)\subseteq \gsp_4(\Z/\ell\Z)$ 
of $G_K$ on the $\mathbb Z/\ell \mathbb Z$ vector space $J(C)[\ell]$
is irreducible, primitive, and contains a transvection, then $\rho_{J(C),\ell}(G_K)\supseteq \mathrm{Sp}_4(\Z/\ell\Z)$. 
\end{theorem}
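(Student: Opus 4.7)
The plan is to analyze the subgroup $N \subseteq H := \rho_{J(C),\ell}(G_K)$ generated by all transvections contained in $H$. Because conjugation in $\gsp(V)$ preserves the defining properties of a transvection (unipotent with $\sigma - I$ of rank one), the subgroup $N$ is normal in $H$; and because transvections have determinant one and preserve the symplectic form, $N \subseteq \Sp_4(\mathbb{F}_\ell)$. By hypothesis $N \neq 1$, so it suffices to show that $N = \Sp_4(\mathbb{F}_\ell)$, from which we immediately obtain $H \supseteq N = \Sp_4(\mathbb{F}_\ell)$.

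The next step would be to show that $N$ acts irreducibly on $V := \mathbb{F}_\ell^4$. By Clifford's theorem, since $N \triangleleft H$ and $H$ acts irreducibly, $V|_N$ decomposes as a direct sum of $N$-isotypic components $V_1 \oplus \cdots \oplus V_k$ that $H$ permutes transitively. If $k \geq 2$, Schur's lemma applied to the $N$-invariant symplectic form forces, for each pair of indices $i,j$, the restricted pairing $\omega|_{V_i \times V_j}$ to be either identically zero or a perfect duality; moreover this dichotomy is respected by the $H$-action on the index set. Grouping the components accordingly (self-dual components as singletons, dual pairs as doubletons) produces a nontrivial $H$-stable decomposition of $V$ into proper non-degenerate symplectic subspaces, contradicting primitivity. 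Hence $k = 1$ and $N$ acts irreducibly.

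Having forced irreducibility, I would invoke the classical classification of irreducible subgroups of $\GL(V)$ generated by transvections (Mitchell, McLaughlin, Wagner, Zalesskii--Serezkin, Kantor). For $\ell$ odd and $\dim V = 4$, the only candidates that preserve a nondegenerate symplectic form are the full group $\Sp_4(\mathbb{F}_\ell)$, its subfield analogues $\Sp_4(\mathbb{F}_{\ell_0})$ for a proper subfield $\mathbb{F}_{\ell_0} \subsetneq \mathbb{F}_\ell$, and a short list of small exceptional groups such as certain central extensions of alternating or small linear groups. The main obstacle I anticipate is ruling out these alternatives: the subfield forms are excluded by showing that the $H$-conjugates of a transvection generate a symplectic module structure over all of $\mathbb{F}_\ell$ (not any proper subfield), and the finite exceptional groups are eliminated by comparing their orders to the size of the $H$-stable conjugacy class of transvections forced to lie inside $N$, which grows polynomially in $\ell$ and eventually exceeds the order of each exceptional candidate. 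Putting these together yields $N = \Sp_4(\mathbb{F}_\ell)$, completing the proof.
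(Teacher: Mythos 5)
This theorem is cited in the paper (to Hall and to Zywina's Proposition 2.2) rather than proved there, so the comparison must be against the arguments in those references. Your overall plan --- pass to the normal subgroup $N \trianglelefteq H$ generated by the transvections in $H$, force $N$ to be irreducible via Clifford theory, and then invoke the Zalesskii--Serezkin/Kantor-type classification of irreducible groups generated by transvections --- is indeed the strategy used there.

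There is, however, a concrete gap in your Clifford step. Grouping the $N$-isotypic components into self-dual singletons and dual doubletons does not always yield a \emph{nontrivial} symplectic decomposition. In $\dim V = 4$, if the isotypic decomposition has exactly two components $V_1, V_2$ forming a dual pair --- i.e.\ both are Lagrangian planes, with $\omega$ restricting to a perfect pairing between $V_1$ and $V_2$ --- then your grouping produces the single piece $V_1 \oplus V_2 = V$, which is not a proper subspace and so does not contradict primitivity as you have defined it. To close this you need the additional observation that a nontrivial symplectic transvection $\tau(v) = v + c\,\omega(v,u)u$ cannot stabilize two complementary Lagrangians: stabilizing $V_i$ forces $u \in V_i$ (because $V_i^{\perp} = V_i$), giving $u \in V_1 \cap V_2 = 0$, a contradiction. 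Since every transvection of $H$ lies in $N$ and $N$ preserves its own isotypic components, this Lagrangian configuration is impossible once $N \neq 1$, and the remaining configurations do give a nontrivial symplectic decomposition. A secondary issue is that the classification endgame is under-specified: $\Sp_4(\F_{\ell_0})$ for a proper subfield $\F_{\ell_0} \subsetneq \F_\ell$ does act absolutely irreducibly on $\F_\ell^4$, so it is not excluded merely by saying the module is "over $\F_\ell$"; and the order-comparison you sketch against exceptional groups only bites for $\ell$ large, whereas the statement covers all odd $\ell$, including $\ell = 3$, which Hall's theorem (stated for $\ell \geq 5$) does not cover and which Zywina must treat separately.
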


The results of \cite{anni-dokchitser:hyperelliptic} give explicit congruence conditions on $f(x)$ so that the criteria of the above theorem are satisfied at all but a finite set of primes $\ell$. For its statement and proof we require two further definitions:

\begin{definition}[\protect{\cite[Definition 1.2, Definition 1.3]{anni-dokchitser:hyperelliptic}}]
For a prime ideal $\p$ of $\OO_K$ with residue characteristic $p$ and corresponding valuation $v_\p$, let $F$ denote the completion of $K$ at $v_\p$, viewed as an extension of $\Q_p$, and let $\OO_F$ denote the ring of integers.
A polynomial $f(x)=x^n+a_{n-1}x^{n-1}+\ldots+a_0\in \OO_F[x]$ is said to be {\it $t$-Eisenstein} at $\p$ if $v_\p(a_i)\geq t$ for $1\leq i\leq n-1$ and $v_\p(a_0)=t$. 

We say that a monic, squarefree polynomial 
$f(x)\in \OO_F[x]$ has {\it type $t-\{q_1,\ldots,q_k\}$} at $\p$ for rational primes $q_1,\ldots,q_k$ if it can be factored as $$
f(x)=h(x)\prod_{i=1}^kg_i(x-\alpha_i)$$
for some $\alpha_i\in \OO_F$ and $h(x),g_i(x)\in \OO_F[x]$ such that $\alpha_i \not \equiv \alpha_j \mod \p$ for all $i \neq j$, $g_i(x)$ is a $t$-Eisenstein polynomial of degree $q_i$, $h(x)$ is separable mod $\p$, and
$h(\alpha_i) \not \equiv 0 \mod \p$ for all $i$. 
We say some $f \in \OO_K[x]$ 
is $t$-Eisenstein (respectively of type $t - \left\{ q_1, \ldots, q_k \right\}$)
if the image of $f$ in $\OO_F[x]$ 
is $t$-Eisenstein
(respectively of type $t - \left\{ q_1, \ldots, q_k \right\}$).
\end{definition}

Note that the following definitions concern vector spaces over $\overline{\mathbb F}_\ell$, whereas the other representation-theoretic considerations in this section, such as Theorem 3.3, deal with $\mathbb F_\ell$-vector spaces.

\begin{definition}[\protect{\cite[Definition 4.6]{anni-dokchitser:hyperelliptic}}]
For $\p \subset \OO_K$ a prime, let $I_\p \subset G_K$ denote the inertia group at $\p$.
We will say that $f(x) \in \OO_K[x]$ is
$\ell$-\textit{admissible} at $\p$ if for every $G_K$-stable decomposition $J[\ell] \otimes \overline{\mathbb F}_\ell = \bigoplus_{i=1}^kV_i$
into symplectic $\overline{\mathbb F}_\ell$-subspaces, $I_{\p}$ acts trivially on $\{V_1, \ldots , V_k\}$. 
We will say that $f(x) \in \OO_K[x]$ is \textit{admissible} at $\p$ if it is $\ell$-admissible at $\p$ for
every odd prime number $\ell$ not divisible by $\p$.
\end{definition}

The following theorem is immediate upon combining the results of \cite{anni-dokchitser:hyperelliptic}.
We spell out the details for completeness.
\begin{theorem}\label{AD} Let $K$ be a number field with no nontrivial unramified extensions (possibly excepting the infinite places), $f(x)\in \OO_K[x]$ a monic irreducible polynomial of degree
$2g+2$, and $\ell>g$ a rational prime, such that the following conditions are satisfied:\begin{enumerate}
\item There exist rational primes $q_1,q_2,q_3$ such that $q_1\leq q_2<q_3 < 2g+2$ and $q_1+q_2=2g+2$.
\item There exist primes $\p_{t_1}$ and $\p_{t_2}$ of distinct, odd residue characteristics such that $f(x)$ has type $1-\{2\}$ at $\p_{t_1}$ and $\p_{t_2}$.
\item There exists a prime $\p_2$ of odd residue characteristic $p_2$ such that the order of the residue field $\F_{\p_2}$ at $\p_2$ is a primitive root mod $q_1$ and $q_2$ and $f(x)$ has type $1-\{q_1,q_2\}$ at $\p_2$.
\item There exists a prime $\p_3$ of odd residue characteristic $p_3$ such that the order of the residue field $\F_{\p_3}$ at $\p_3$ is a primitive root mod $q_3$ and $f(x)$ has type $2-\{q_3\}$ at $\p_3$.
\item The curve $C$ defined by $y^2=f(x)$ has good reduction at all primes above $2$.
\item The curve $C$ has semistable reduction at all primes $\p\nmid 2\p_2\p_3$.
\item For all primes $\p \mid \ell$ we have $\ell> 2e_\p+1$, where $e_\p$ is the ramification degree of $\p$.
\item We have $\ell\not\in \{q_1,q_2,q_3,p_2,p_3\}$.
\end{enumerate} Then we have $\rho_{J(C),\ell}(G_K)\supseteq \mathrm{Sp}_{2g}(\Z/\ell\Z)$.
\end{theorem}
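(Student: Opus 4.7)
The plan is to verify the three hypotheses of Hall's big-monodromy criterion (\cite[Theorem 1.1]{hall2008big}, the higher-genus analogue of the input used in \autoref{surjective}): namely that the image of $\rho_{J(C),\ell}$ in $\gsp_{2g}(\Z/\ell\Z)$ is (a) irreducible, (b) primitive, and (c) contains a transvection. Each of these follows from isolating a single prime of $\OO_K$ and applying one of the ``type'' criteria of \cite{anni-dokchitser:hyperelliptic} to read off the action of a chosen inertia generator on $J(C)[\ell]$. Granting (a), (b), (c), Hall's theorem gives $\rho_{J(C),\ell}(G_K) \supseteq \mathrm{Sp}_{2g}(\Z/\ell\Z)$; note that $\ell$ is automatically odd here because condition (7) forces $\ell \geq 3$.

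For the transvection I would use conditions (2), (5), (6), and (7). At a prime $\p$ of type $1-\{2\}$ the curve $C$ acquires semistable reduction with a single node (two Weierstrass points colliding modulo $\p$), so by Picard--Lefschetz a generator of inertia at $\p$ acts on $H^1(C_{\overline{K}_\p},\Z_\ell)$ as a transvection; the bound $\ell > 2e_\p + 1$ is what allows this rank-one property to survive reduction mod $\ell$, while hypotheses (5)--(6) ensure that the N\'eron-model and vanishing-cycle arguments apply. Two primes $\p_{t_1}, \p_{t_2}$ of distinct odd residue characteristics (rather than one) are needed so that, for any given $\ell$, at least one $\p_{t_i}$ has residue characteristic different from $\ell$ and therefore yields a usable transvection mod $\ell$.

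For irreducibility and primitivity I would use the primes $\p_2$ and $\p_3$. The type $1-\{q_1,q_2\}$ at $\p_2$ produces an inertia element whose action on $J(C)[\ell]$ is essentially block-diagonal, with blocks indexed by the two Eisenstein factors of degrees $q_1$ and $q_2$; the primitive-root hypotheses on $|\F_{\p_2}|$ modulo $q_1$ and $q_2$ make these blocks irreducible over $\F_\ell$, and then the identity $q_1 + q_2 = 2g+2$ from condition (1) forces irreducibility of the full representation because any invariant subspace must be compatible with the block decomposition. For primitivity I would apply an analogous argument at $\p_3$: the type $2-\{q_3\}$ factorization, the primitive-root condition on $|\F_{\p_3}|$ mod $q_3$, and the bound $q_3 < 2g+2$ together produce an inertia element whose orbit structure on $J(C)[\ell]$ cannot be partitioned across any proper $G_K$-stable symplectic decomposition of $V$, because the resulting irreducible cyclic block does not fit into any such proper symplectic summand.

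The main obstacle is the bookkeeping needed to match each numerical hypothesis ($\ell > g$, $\ell > 2e_\p + 1$, and the exclusions in (8)) to the specific Anni--Dokchitser lemma it supports, and to use the hypothesis that $K$ has no nontrivial unramified extensions in order to upgrade purely local inertia statements into statements about the global image $\rho_{J(C),\ell}(G_K)$, so that the transvection (respectively, the irreducibility and primitivity data) really lives in the global image rather than only in a decomposition subgroup. No essentially new argument beyond \cite{anni-dokchitser:hyperelliptic} and \cite{hall2008big} is required; as the authors indicate, the theorem is immediate upon combining those works.
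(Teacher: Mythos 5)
Your overall plan---reduce to irreducibility, primitivity, and a transvection and then invoke Hall's criterion---is the same as the paper's, and the transvection sketch (type $1$--$\{2\}$ gives a single vanishing cycle, two primes of distinct odd residue characteristics so that one avoids $\ell$) matches the intent of \cite[Lemma 2.9]{anni-dokchitser:hyperelliptic}. However, two of the three verifications diverge from the paper in substantive ways, one of which is a real gap.

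On irreducibility, you assign $\p_2$ alone to this task, but the paper's citation of \cite[Lemma 3.2]{anni-dokchitser:hyperelliptic} uses conditions (3), (4), \emph{and} (8), i.e.\ both $\p_2$ and $\p_3$ together. A single inertia element at $\p_2$ acting with two blocks of sizes $q_1$ and $q_2$ does not by itself rule out a $G_K$-invariant subspace of one of those dimensions; the type $2$--$\{q_3\}$ datum at $\p_3$ with $q_3 < 2g+2$ distinct from $q_1,q_2$ is needed to exclude such invariant subspaces of intermediate dimension. This is a misallocation of hypotheses, but it is repairable.

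On primitivity, the argument you give is not correct. You propose to use the orbit structure of one inertia element at $\p_3$ and claim that the resulting cyclic block ``does not fit into any proper $G_K$-stable symplectic summand.'' But imprimitivity only requires the summands $V_i$ to be \emph{permuted}, not preserved, by $G_K$; a single local element can easily permute the blocks transitively while still being compatible with an imprimitive decomposition, so no single inertia datum can exclude all such decompositions. The paper's argument is inherently global: since the representation is irreducible and $K$ has no nontrivial extensions unramified at all finite places, \cite[Proposition 4.4]{anni-dokchitser:hyperelliptic} reduces primitivity to showing $f$ is admissible at every $\p \nmid \ell$ and $\ell$-admissible at every $\p \mid \ell$ in the sense of \cite[Propositions 4.7, 4.12]{anni-dokchitser:hyperelliptic}, and these admissibility conditions are then checked prime-by-prime using (5)--(8) together with \cite[Lemmas 4.9--4.11 and 7.5]{anni-dokchitser:hyperelliptic}. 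This is precisely where the no-unramified-extensions hypothesis on $K$ enters, and it does so through a class-field-theoretic constraint on where the permutation action on the putative blocks can ramify, not through ``upgrading a local statement to a global one'' at the single prime $\p_3$. Your sketch omits the admissibility check at all the other primes (where conditions (5) and (6) are actually used), which is the content of the proof rather than bookkeeping.
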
\begin{proof}
Let $\ell>g$ be a rational prime satisfying conditions (7) and (8). It suffices by \autoref{surjective}
to show that $\rho_{J(C),\ell}(G_K)$ is irreducible, primitive, and contains a transvection. 
By condition (2), \cite[Lemma 2.9]{anni-dokchitser:hyperelliptic} implies that $\rho_{J(C),\ell}(G_K)$ 
contains a transvection. Note that the residue characteristic of at least one of the $\p_{t_i}$ will be distinct from $\ell$, as required in \cite[Lemma 2.9]{anni-dokchitser:hyperelliptic}. By conditions (1), (3), (4), and (8), \cite[Lemma 3.2]{anni-dokchitser:hyperelliptic} 
implies that $\rho_{J(C),\ell}$ is irreducible. Then since $\rho_{J(C),\ell}$ is irreducible and $K$ has no nontrivial extensions unramified at all finite places, 
\cite[Proposition 4.4]{anni-dokchitser:hyperelliptic} reduces the problem of showing that $\rho_{J(C),\ell}$ 
is primitive to showing that $f(x)$ satisfies the two conditions of \cite[Proposition 4.7]{anni-dokchitser:hyperelliptic}:
that $f(x)$ is admissible at all $\p \nmid \ell$ and that $f(x)$ is $\ell$-admissible at all $\p \mid \ell$.

We first check that $f(x)$ is admissible at all $\p$, which is the first condition of \cite[Proposition 4.7]{anni-dokchitser:hyperelliptic}.
Conditions (5) and (6), together with \cite[Lemma 7.5(ii)]{anni-dokchitser:hyperelliptic}, imply that $J(C)$ is semistable at all $\p\neq \p_2,\p_3$, so that $f(x)$ is admissible at all $\p\neq \p_2,\p_3$ by 
\cite[Lemma 4.9]{anni-dokchitser:hyperelliptic}. Then note that the primitive root assumption of condition (3) implies that $q_1,q_2\neq p_2$, so that $f(x)$ is admissible at $\p_2$ and $\p_3$ by 
\cite[Lemmas 4.10 and 4.11]{anni-dokchitser:hyperelliptic}, respectively. 
So, we have verified the first condition of \cite[Proposition 4.7]{anni-dokchitser:hyperelliptic}.

To complete the proof, we verify the second condition of \cite[Proposition 4.7]{anni-dokchitser:hyperelliptic},
i.e., $f(x)$ is $\ell$-admissible at $\p$ for all $\p \mid \ell$. 
By \cite[Proposition 4.12]{anni-dokchitser:hyperelliptic}, it suffices to check that
$\mathrm{disc}(f)\not\in \p^2$ (guaranteeing semistability at $\p$) and $\ell>\mathrm{max}(g,2e_{\p}+1)$, 
where $e_\p$ is the ramification degree of $\p$. 
The first statement follows from conditions (6) and (8), and the second statement follows from condition (7).
\end{proof}

\section{Verifying the example}\label{proofsection}

Using \autoref{AD}, we can now compute the $\bmod$-$\ell$ image of the Galois representation associated to our hyperelliptic curve $C$. We first note that
$K$ has no nontrivial unramified extensions, again considering only the finite places, which essentially follows
from Minkowski's bound on the discriminant of an extension of $\mathbb Q$.
\begin{lemma}[\protect{\cite{26504}}]\label{lemma:unramified} Let $K=\Q(\alpha)$, where $\alpha$ is a root of $x^3+x+1$. Then $K$ has no nontrivial extensions unramified at all finite places.\end{lemma}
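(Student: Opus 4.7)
The plan is to apply Minkowski's discriminant bound together with the transitivity of the discriminant in a tower.

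First I would compute $\mathrm{disc}(K/\mathbb{Q})$ explicitly. The polynomial $x^3 + x + 1$ has discriminant $-4\cdot 1^3 - 27\cdot 1^2 = -31$, which is squarefree. So $\mathcal{O}_K = \mathbb{Z}[\alpha]$ and $\mathrm{disc}(K/\mathbb{Q}) = -31$. In particular $K$ has one real and one complex place (signature $(1,1)$).

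Next I would set up the contradiction. Suppose for contradiction that there is a nontrivial extension $M/K$ unramified at every finite place, and let $d := [M:K] \geq 2$, so $n := [M:\mathbb{Q}] = 3d$. By the conductor-discriminant tower formula,
\[
  \mathrm{disc}(M/\mathbb{Q}) = N_{K/\mathbb{Q}}\bigl(\mathrm{disc}(M/K)\bigr)\cdot\mathrm{disc}(K/\mathbb{Q})^{d}.
\]
Since $M/K$ is unramified at all finite places, $\mathrm{disc}(M/K) = \mathcal{O}_K$, so $|\mathrm{disc}(M/\mathbb{Q})| = 31^d$. On the other hand, Minkowski's bound applied to $M$ gives
\[
  |\mathrm{disc}(M/\mathbb{Q})|^{1/2} \ \geq\ \frac{n^{n}}{n!}\left(\frac{\pi}{4}\right)^{r_2} \ \geq\ \frac{(3d)^{3d}}{(3d)!}\left(\frac{\pi}{4}\right)^{3d/2},
\]
using the worst case $r_2 \leq n/2$. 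Combining the two estimates, a contradiction follows if we can show
\[
  31^{d} \ <\ \left(\frac{(3d)^{3d}}{(3d)!}\right)^{2}\left(\frac{\pi}{4}\right)^{3d}
  \qquad\text{for every integer } d \geq 2.
\]

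The main obstacle is that this inequality is extremely tight at $d=2$: the right-hand side is roughly $31.39\ldots$, only a hair above $31$. So the base case of $d=2$ requires a careful numerical verification (or, to be safe, an appeal to the sharper Odlyzko-Serre bounds, which provide substantial slack for signature $(2,2)$ fields of degree $6$). For $d \geq 3$ the inequality is easy: by Stirling's approximation the right-hand side grows like $(3d/e)^{3d}(\pi/4)^{3d}$, which is super-exponential in $d$, whereas the left-hand side $31^{d}$ is only simply exponential. Thus one can finish by a direct computation for $d = 2, 3$ and an elementary induction (or a closed-form Stirling estimate) for $d \geq 4$. Once the inequality is established for every $d \geq 2$, no such $M$ can exist, proving that $K$ admits no nontrivial extension unramified at all finite places.
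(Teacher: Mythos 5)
Your approach is the paper's: the cited reference proves this lemma via Minkowski's discriminant bound, and your computation of $\disc(K/\Q)=-31$ followed by the discriminant-tower formula and the Minkowski comparison is exactly that argument. One minor slip that does not affect the conclusion: by Stirling, $\left(\frac{(3d)^{3d}}{(3d)!}\right)^{2}\left(\frac{\pi}{4}\right)^{3d}$ grows like $\frac{(e^2\pi/4)^{3d}}{6\pi d}$, i.e.\ exponentially in $d$ with base $(e^2\pi/4)^3\approx 195$ rather than super-exponentially as $(3d/e)^{3d}(\pi/4)^{3d}$; since $195>31$ and you verify $d=2,3$ numerically, the inequality $31^d<\bigl((3d)^{3d}/(3d)!\bigr)^2(\pi/4)^{3d}$ still holds for all $d\ge 2$ and the contradiction stands.
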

Next, we apply \autoref{AD} to verify surjectivity of our Galois representation
at all but a finite set of primes.
\begin{lemma}\label{lemma:applyingAD} The mod-$\ell$ Galois representations $\rho_{J(C),\ell}$ associated to the curve $C$ in the statement of \autoref{theorem:polynomial} satisfy $\rho_{J(C),\ell}(G_K)\supseteq \Sp_4(\Z/\ell\Z)$ for all $\ell\not\in \{2,3,5,17\}$.
\end{lemma}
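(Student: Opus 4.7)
The plan is to apply \autoref{AD} to the curve $C$ of \autoref{theorem:polynomial} in the case $g=2$. Observing that the conditions $q_1\leq q_2<q_3<6$ and $q_1+q_2=6$ leave no choice but $q_1=q_2=3$ and $q_3=5$, I would invoke \autoref{AD} with this $(q_1,q_2,q_3)$. \autoref{lemma:unramified} supplies the running hypothesis that $K$ has no nontrivial unramified extensions, so it remains to locate primes of $\OO_K$ realizing conditions (2)--(4), and to verify the local conditions (5)--(8) on $C$ and on $\ell$.

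My next step is to identify small primes of $\OO_K$ at which $f(x)$ has the required reduction types. For each candidate rational prime $p$, I would (a) factor $p\OO_K$ using the factorization of $x^3+x+1$ mod $p$, (b) for each prime $\p$ over $p$ locate the roots of $f$ in the residue field $\F_\p$, (c) translate $x\mapsto x-\widetilde\alpha$ for each lift $\widetilde\alpha\in\OO_K$ of such a root, and (d) read off the $\p$-adic valuations of the resulting coefficients to confirm the Eisenstein structure. For (3) we additionally require $|\F_{\p_2}|\equiv 2\pmod 3$, and for (4) that $|\F_{\p_3}|$ be a primitive root mod $5$; these residue-field constraints, combined with the precise coefficient shapes of $g_i$, make the candidate primes very scarce. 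The coefficients of $f(x)$ were manifestly engineered with these constraints in mind, and the expectation is that $\p_{t_1},\p_{t_2}$ lie over two small odd rational primes, $\p_2$ over some $p_2\equiv 2\pmod 3$ admitting a $1-\{3,3\}$ factorization of $f$, and $\p_3$ over a prime which turns out to be $17$ (thereby explaining the appearance of $17$ in the exceptional set).

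For conditions (5) and (6) I would compute $\disc(f)\in\OO_K$ and factor it as an ideal. Good reduction above $2$ (condition (5)) follows from $\disc(f)$ being a unit at all primes over $2$. Semistability at $\p\nmid 2\p_2\p_3$ (condition (6)) follows from $v_\p(\disc(f))\leq 1$ at all such $\p$, via \cite[Lemma 7.5(ii)]{anni-dokchitser:hyperelliptic}; this is again a finite check once the factorization of $\disc(f)$ is in hand. Since $K$ is unramified outside $31$ and the ramification index at $31$ is at most $2$, condition (7) excludes exactly $\ell\in\{2,3\}$ (where $\ell>2e_\p+1$ fails), while condition (8) excludes $\ell\in\{3,5,p_2,p_3\}$; combined with the identification of $17$ as one of $p_2,p_3$, this recovers the exceptional set $\{2,3,5,17\}$ in the statement.

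The main obstacle is the prime-hunting step of paragraph two: the simultaneous residue-field primitivity and explicit Eisenstein-type conditions at $\p_2$ and $\p_3$ are very restrictive, and one cannot simply choose $\p_2$ and $\p_3$ freely but must verify that the specific $f(x)$ takes the required shape after a suitable translation. Once acceptable primes are located and $\disc(f)$ has been factored, everything else is a mechanical check best performed with a computer algebra system such as \textsc{Magma} or \textsc{Sage}, using the explicit $f$ and the decompositions of small rational primes in $\OO_K=\Z[\alpha]$.
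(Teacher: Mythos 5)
Your overall strategy---invoke \autoref{AD} with the forced choice $q_1=q_2=3$, $q_3=5$, use \autoref{lemma:unramified} for the ambient hypothesis on $K$, computationally locate primes of $\OO_K$ realizing the reduction types of conditions (2)--(4), and trace the exceptional set $\{2,3,5,17\}$ to conditions (7) and (8)---matches the paper's proof, which indeed finds $\p_{t_1}=(7)$, $\p_{t_2}=(3,\alpha+2)$, $\p_2=(5)$, $\p_3=(17,\alpha+6)$ and records the relevant congruences satisfied by $f$ at those primes. However, there is a genuine gap in your treatment of condition (5). You claim that good reduction of $C$ at primes over $2$ follows from $\disc(f)$ being a unit at those primes, but this is false for hyperelliptic curves $y^2 = f(x)$: the Weierstrass model is never smooth in residue characteristic $2$ (the partial derivative in $y$ vanishes identically), and even when $\disc(f)$ is a $2$-adic unit the curve can have bad reduction. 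Already for elliptic curves, the discriminant of $y^2 = x^3 + ax + b$ is $-16(4a^3 + 27b^2)$, so the curve's discriminant carries extra $2$-adic content beyond $\disc(f)$, and minimality at $2$ requires a separate analysis; the situation for genus $2$ is analogous. The paper instead checks a precise congruence $f(x)\equiv x^6+2x^5+2^4 \pmod{2^6}$ and appeals to \cite[Lemma 7.7]{anni-dokchitser:hyperelliptic}, which gives an explicit $2$-adic criterion for good reduction above $2$. Without some such replacement, your argument for condition (5) does not go through.

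A secondary, milder issue concerns condition (6): you propose checking $v_\p(\disc f)\le 1$ at all $\p\nmid 2\p_2\p_3$, citing \cite[Lemma 7.5(ii)]{anni-dokchitser:hyperelliptic}. The paper instead uses \cite[Lemma 7.6]{anni-dokchitser:hyperelliptic}, which only requires that $\bar f$ have no roots of multiplicity $>2$ over $\overline{\F}_\p$, and verifies this by checking $\p\nmid\gcd(\disc f,\disc f')$. Your criterion is strictly stronger (it rules out even a pair of distinct double roots), so it is sufficient when it holds, but it is not guaranteed to hold for the specific $f$ in the theorem, and you would need to confirm by computation that no extra primes appear. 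The citation is also off: \cite[Lemma 7.5(ii)]{anni-dokchitser:hyperelliptic} is the ingredient used inside the proof of \autoref{AD} to pass from semistability of $C$ to semistability of $J(C)$, not a direct discriminant criterion.
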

\begin{proof}
	We apply \autoref{AD} to the $f(x)\in \OO_K[x]$ in \autoref{theorem:polynomial}, taking $q_1=q_2=3$, $q_3=5$, $\p_{t_1}=(7)$, $\p_{t_2}=(3,\alpha+2)^2$, $\p_2=(5)$, and $\p_3=(17,\alpha+6)$, where we check that $\#\F_{(5)}= 125\equiv 2 \mod 3$ and $\#\F_{(17,\alpha+6)}=17\equiv 2\mod 5$ are primitive roots. These choices of the $q_i$ and $p_i$, along with the assumptions of the lemma, are immediately seen to satisfy conditions (1) and (8). 

	We next verify condition (7). Note that for all primes $\p$ we have $2e_\p+1\leq 2[K:\Q]+1=7$, so that the condition is trivially satisfied for all $\ell>7$. Then since $\ell\neq 2,3,5$ by assumption, it only remains to check the case $\ell=7$, and since $7$ is unramified (and even inert) in $K$, we have $7 > 3 = 2e_{(7)} + 1$, so the inequality is satisfied. 
	
	By construction, $f(x)$ satisfies the following congruence conditions
\begin{align}
     f(x)&\equiv (x^2+7)(x^4+1) &\mod &(7)^2\\
     f(x)&\equiv (x^2+3)(x^4-2x^3+2x^2+1) &\mod &(3,\alpha+2)^2\\
     f(x)&\equiv (x^3+5)((x+1)^3+5) &\mod &(5)^2\\
     f(x)&\equiv (x^5+17^2)(x+1) &\mod &(17,\alpha+6)^3\\
     f(x)&\equiv x^6+2x^5+2^4 &\mod &2^6.
\end{align}
So, conditions (2)-(4) of \autoref{AD} are satisfied, and condition (5) follows from the final congruence condition by \cite[Lemma 7.7]{anni-dokchitser:hyperelliptic}. 

To conclude, we verify condition (6).
By \cite[Lemma 7.5(i)]{anni-dokchitser:hyperelliptic} in order to show $C$ is semistable at $\p$ it suffices to check $f(x)$ has no roots of multiplicity greater than $2$ over an algebraic closure of the residue field at $\p$. Therefore, it suffices to verify $\p \nmid \mathrm{GCD}(\mathrm{disc}(f),\mathrm{disc}(f'))$. 
A \texttt{magma} calculation shows that the only $\p$ for which $\p \nmid \mathrm{GCD}(\mathrm{disc}(f),\mathrm{disc}(f'))$ 
are $\p=(2),(5),(17,\alpha+6)$,
so condition (6) holds. 
Thus \autoref{AD} shows $\rho_{J(C),\ell}(G_K)\supseteq \mathrm{Sp}_4(\Z/\ell\Z)$ for all $\ell\not\in \{2,3,5,17\}$. 
\end{proof}

It remains only to check that $\rho_{J(C),\ell}(G_K) \supseteq \mathrm{Sp}_4(\Z/\ell\Z)$ at the remaining primes $2,3,5,$ and $17$.
\begin{lemma}\label{lemma:mod2} For $f(x)\in \OO_K[x]$ as in the statement of \autoref{theorem:polynomial}, we have $\rho_{J(C),2}(G_K)=\Sp_4(\Z/2\Z)$.\end{lemma}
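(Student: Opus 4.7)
The first observation is that $\Sp_4(\Z/2\Z) = \gsp_4(\Z/2\Z)$, since the multiplier map lands in $(\Z/2\Z)^\times = \{1\}$. Under the classical isomorphism $\gsp_4(\Z/2\Z) \cong S_6$ already recalled in \autoref{adicsection}, the mod-$2$ representation $\rho_{J(C),2}$ is identified with the natural action of $G_K$ on the six Weierstrass points of $C$, i.e.\ on the roots of $f$ in $\overline K$. Consequently, to establish $\rho_{J(C),2}(G_K) = \Sp_4(\Z/2\Z)$ it suffices to prove that the Galois group of a splitting field of $f$ over $K$ equals $S_6$.

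To do this, the plan is to exhibit elements of $\Gal_K(f) \subseteq S_6$ of suitable cycle types. First, a direct factorization check of $f$ in $\OO_K[x]$ (equivalently, exhibiting a single prime $\p$ at which $f$ remains irreducible modulo $\p$) shows that $f$ is irreducible over $K$, so that $\Gal_K(f)$ acts transitively on the six roots of $f$.

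Next, applying Dedekind's theorem, which identifies the cycle type of Frobenius at a prime $\p \nmid \disc(f)$ with the list of degrees in the factorization of $f \bmod \p$, the plan is to locate two auxiliary primes of $\OO_K$: a prime $\mathfrak{q}_5$ at which $f \bmod \mathfrak{q}_5$ factors as an irreducible quintic times a linear factor (yielding a $5$-cycle in $\Gal_K(f)$), and a prime $\mathfrak{q}_2$ at which $f \bmod \mathfrak{q}_2$ factors as an irreducible quadratic times four distinct linear factors (yielding a transposition).

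The group-theoretic conclusion is then as follows. The only proper imprimitive transitive subgroups of $S_6$ sit inside the wreath products $S_2 \wr S_3$ or $S_3 \wr S_2$, of orders $48$ and $72$, neither divisible by $5$. Hence any transitive subgroup of $S_6$ containing a $5$-cycle is automatically primitive. By Jordan's classical theorem, any primitive subgroup of $S_n$ containing a transposition equals all of $S_n$; applied with $n = 6$, this gives $\Gal_K(f) = S_6$, and therefore $\rho_{J(C),2}(G_K) = \Sp_4(\Z/2\Z)$. The main obstacle is not theoretical but computational: locating the appropriate primes $\mathfrak{q}_5$ and $\mathfrak{q}_2$ (and verifying irreducibility of $f$) requires a small search through primes of $\OO_K$, most conveniently performed in \texttt{magma}.
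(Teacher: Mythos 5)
Your proposal follows the same reduction as the paper: identify $\rho_{J(C),2}$ with the $G_K$-action on the six roots of $f$ via $\gsp_4(\Z/2\Z)\simeq S_6$, and reduce to showing $\Gal_K(f) = S_6$. The paper then simply cites a \texttt{magma} Galois group computation, whereas you outline a Dedekind-theorem-plus-Jordan's-theorem certificate (a $5$-cycle forces primitivity since the imprimitive transitive subgroups of $S_6$ have order $48$ or $72$, and a transposition then yields all of $S_6$); both are correct, and your version makes the computational claim more readily verifiable by hand.
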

\begin{proof}For $\ell=2$ we have $\gsp_4(\F_2)=\mathrm{Sp}_4(\F_2)=S_6$, and we can identify the $2$-torsion points of $J(C)$ with differences of 
Weierstrass points.
Since the Weierstrass points correspond to 
roots of $f$, the $G_K$ action on $J(C)[2]$ is determined by the Galois group of $f$. A \texttt{magma} calculation shows the Galois group of $f$ is $S_6$.
Since $S_6 \simeq \Sp_4(\Z/2\Z)$, 
$\rho_{J(C),2}(G_K)= \mathrm{Sp}_4(\Z/2\Z)$.
\end{proof}
\begin{lemma}\label{lemma:frobenius} For $f(x)\in \OO_K[x]$ as in the statement of \autoref{theorem:polynomial}, we have $\rho_{J(C),\ell}(G_K)\supseteq \Sp_4(\Z/\ell\Z)$ for all $\ell\in \{3,5,17\}$.\end{lemma}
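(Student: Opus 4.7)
The plan is to verify the hypotheses of \autoref{surjective} at each of the remaining primes $\ell \in \{3,5,17\}$, working around the fact that the auxiliary primes $\mathfrak{p}_2 = (5)$ and $\mathfrak{p}_3 = (17, \alpha+6)$ used in the proof of \autoref{lemma:applyingAD} have residue characteristics that conflict with $\ell = 5$ and $\ell = 17$ respectively, and that condition (8) of \autoref{AD} further excludes $\ell = 3$ because $q_1 = q_2 = 3$. Since $\ell > g = 2$ in every case, \autoref{surjective} is available, so it suffices to show that for each such $\ell$ the image $\rho_{J(C),\ell}(G_K)$ is irreducible, primitive, and contains a transvection.

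For the transvection, I would simply reuse $\mathfrak{p}_{t_1} = (7)$: its residue characteristic $7$ avoids all of $\{3,5,17\}$, and since $f(x)$ has type $1-\{2\}$ at $\mathfrak{p}_{t_1}$, \cite[Lemma 2.9]{anni-dokchitser:hyperelliptic} immediately produces a transvection in $\rho_{J(C),\ell}(G_K)$ for each $\ell$ in the remaining set. For irreducibility and primitivity, the plan is to compute characteristic polynomials of Frobenius $P_{\mathfrak{q}}(T) := \charpoly(\rho_{J(C),\ell}(\Frob_{\mathfrak{q}}))$ at a short list of small primes $\mathfrak{q}$ of $\OO_K$ at which $C$ has good reduction, using \texttt{magma}, and then exploiting the constraints these impose on any Galois-stable subspace or decomposition. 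Irreducibility is ruled out by exhibiting a $\mathfrak{q}$ whose reduced characteristic polynomial admits no factorization over $\F_\ell$ compatible with a Galois-stable line or symplectic plane; the eigenvalue pattern on a stable subspace would have to be consistent across every Frobenius, which is easy to contradict numerically.

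Primitivity will be the main obstacle. By \autoref{definition:primitive}, one must rule out a nontrivial $G_K$-stable decomposition of $V = J(C)[\ell]$ into symplectic subspaces; by dimension, such a decomposition consists of two symplectic planes $V = V_1 \oplus V_2$, and the associated homomorphism $\phi\colon G_K \to S_2$ cuts out an at most quadratic extension $L/K$. The plan is to observe that if such a decomposition existed, then for every Frobenius $\Frob_{\mathfrak{q}}$ that fixes $L$ the polynomial $P_{\mathfrak{q}}(T) \bmod \ell$ must factor as a product of two degree-$2$ polynomials whose roots pair up under the Weil involution $\lambda \mapsto \mult(\Frob_\mathfrak{q})/\lambda$; and for any Frobenius swapping the $V_i$, the characteristic polynomial has the form $g(T)\,\widetilde{g}(T)$ for $g$ of degree $2$ and $\widetilde g$ its twist. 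Computing $P_{\mathfrak{q}}(T) \bmod \ell$ at a handful of primes $\mathfrak{q}$ of varying splitting behavior (including primes split and inert over the candidate quadratic $L$) then yields a contradiction. In the borderline case where the transvection above already generates a unipotent element whose $1$-eigenspace meets both $V_i$ nontrivially, primitivity can alternatively be argued directly. Carrying out these Frobenius computations in \texttt{magma} at a small number of primes is routine and will finish the proof.
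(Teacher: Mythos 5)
Your overall strategy matches the paper's: both invoke \autoref{surjective}, obtain the transvection from $f$ having type $1-\{2\}$ at a prime whose residue characteristic avoids $\{3,5,17\}$ via \cite[Lemma 2.9]{anni-dokchitser:hyperelliptic}, and deduce irreducibility of $J(C)[\ell]$ by finding a good-reduction prime $\mathfrak{q}$ such that the characteristic polynomial $P_{\mathfrak{q}}$ of Frobenius is irreducible mod $\ell$. (The paper uses $\p=(37,\alpha+12)$ for $\ell=3,5$ and $\p=(29,\alpha+3)$ for $\ell=17$.)

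For primitivity, however, your sketch has a real gap. You propose to contrast Frobenius elements that are split versus inert in the hypothetical quadratic extension $L/K$ cut out by $\phi\colon G_K\to S_2$, but $L$ is determined by the hypothetical decomposition, which you do not know; you cannot deliberately select primes of prescribed splitting type in an unknown field, nor quantify over all possible $L$'s without a further argument. You also don't pin down exactly which shape of $P_\mathfrak{q}\bmod\ell$ would be forced in the ``Frobenius swaps $V_1,V_2$'' case, so ``yields a contradiction'' is not yet a proof. The paper closes this cleanly with a single observation you are missing: if $P_\mathfrak{q}$ is irreducible mod $\ell$ \emph{and} $\tr(\Frob_\mathfrak{q})\not\equiv 0\bmod\ell$, then primitivity is automatic. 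Indeed, in a nontrivial $G_K$-stable decomposition $V=\bigoplus V_i$, a Frobenius permuting the blocks without fixing any has trace $0$ (all diagonal blocks vanish); hence a nonzero trace forces $\Frob_\mathfrak{q}$ to fix some $V_j$, giving a proper invariant subspace and contradicting irreducibility of $P_\mathfrak{q}$. This collapses the irreducibility and primitivity checks into a single condition at a single prime, which is what the paper verifies via \texttt{magma}. You should replace your split/inert case analysis with this trace criterion.
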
\begin{proof}
	For this verification, we use \autoref{surjective}. Since $f(x)$ has type $1-\{2\}$ at two distinct odd primes, ~\cite[Lemma 2.9]{anni-dokchitser:hyperelliptic} implies that $\rho_{J(C),\ell}(G_K)$ contains a transvection for all $\ell$. To show irreducibility and primitivity of $J(C)[\ell]$ for $\ell\in\{3,5,17\}$ we use Frobenius elements at primes of good reduction to show the non-existence of $G_K$-stable decompositions (as vector spaces over $\mathbb F_\ell$). 
First, note that	
if the characteristic polynomial $P_\p=\mathrm{det}(TI-\rho_{J(C),\ell^\infty}(\mathrm{Frob}_\p))\in \Z[T]$ is irreducible mod $\ell$ then $J(C)[\ell]$ must be irreducible as a $G_K$-module.
Further, if $P_\p$ is irreducible and $\mathrm{tr}(\mathrm{Frob}_\p)\not \equiv 0 \mod \ell$ 
we claim $J(C)[\ell]$ must be primitive. 
Indeed, if there were some decomposition $J(C)[\ell] =\bigoplus_{i=1}^k V_i$ with $k > 1$ and all $V_i$ proper subspaces so that the $V_i$ are permuted by
the action of $G_K$, then $\mathrm{tr}(\mathrm{Frob}_\p)\not \equiv 0 \mod \ell$ implies some $V_j$ must be fixed by Frobenius.
This contradicts irreducibility of $P_\p$.

For each $\ell \in \left\{ 3,5,7 \right\}$, it therefore suffices to find a prime $\p$ with $P_\p$ irreducible and $\mathrm{tr}(\mathrm{Frob}_\p)\not \equiv 0 \mod \ell$.
Calculating 
the characteristic polynomials of various primes in \texttt{magma},
we find that for $\ell=3,5$ we can take $\p=(37,\alpha+12)$ and for $\ell=17$ we can take $\p=(29,\alpha+3)$, where the characteristic polynomials are given by:
\begin{align*}
P_{(37,\alpha+12)}&=T^4 + 16T^3 + 136T^2 + 592T + 1369
\\
P_{(29,\alpha+3)}&=T^4 - 5T^3 + 48T^2 - 145T + 841.
\end{align*}
Thus $\rho_{J(C),\ell}(G_K)\supseteq \mathrm{Sp}_4(\F_\ell)$ for all $\ell$, as desired. \end{proof}
Our main theorem now follows immediately:
\begin{proof}[Proof of \autoref{theorem:polynomial}]
	Combining \autoref{lemma:applyingAD}, \autoref{lemma:mod2}, and \autoref{lemma:frobenius} we obtain $\rho_{J(C),\ell}(G_K)\supseteq \Sp_4(\F_\ell)$ for all primes $\ell$. 
	By \autoref{lemma:modl}, we then have $\rho_{J(C)}(G_K)=\gsp_4(\widehat{\Z})$, completing the proof.
\end{proof}

\bibliographystyle{alpha}
\bibliography{galois-surface-bibliography}

\begin{thebibliography}{LSTX16b}

\bibitem[AD17]{anni-dokchitser:hyperelliptic}
S.~{Anni} and V.~{Dokchitser}.
\newblock {Constructing hyperelliptic curves with surjective Galois
  representations}.
\newblock {\em arXiv:1701.05915v1}, January 2017.

\bibitem[{Con}]{26504}
K.~{Conrad}.
\newblock Is there a ring of integers except for $\mathbb{Z}$, such that every
  extension of it is ramified?
\newblock MathOverflow.
\newblock URL:https://mathoverflow.net/q/26504 (version: 2016-12-01).

\bibitem[Die02]{dieulefait:explicit-determination-of-the-images}
L.~V. Dieulefait.
\newblock Explicit determination of the images of the {G}alois representations
  attached to abelian surfaces with {${\rm End}(A)=\Bbb Z$}.
\newblock {\em Experiment. Math.}, 11(4):503--512 (2003), 2002.

\bibitem[Gre10]{greiciusthesis}
A.~Greicius.
\newblock Elliptic curves with surjective adelic {G}alois representations.
\newblock {\em Experiment. Math.}, 19(4):495--507, 2010.

\bibitem[Hal08]{hall2008big}
Chris Hall.
\newblock Big symplectic or orthogonal monodromy modulo {$l$}.
\newblock {\em Duke Math. J.}, 141(1):179--203, 2008.

\bibitem[LSTX16a]{landesman-swaminathan-tao-xu:lifting-symplectic-group}
A.~{Landesman}, A.~A. {Swaminathan}, J.~{Tao}, and Y.~{Xu}.
\newblock {Lifting Subgroups of Symplectic Groups over $\mathbb{Z} / \ell
  \mathbb{Z}$}.
\newblock {\em arXiv:1607.04698v2}, July 2016.

\bibitem[LSTX16b]{landesman-swaminathan-tao-xu:rational-families}
A.~{Landesman}, A.~A. {Swaminathan}, J.~{Tao}, and Y.~{Xu}.
\newblock {Surjectivity of Galois Representations in Rational Families of
  Abelian Varieties}.
\newblock {\em arXiv:1608.05371v2}, August 2016.

\bibitem[LSTX17]{landesman-swaminathan-tao-xu:hyperelliptic-curves}
A.~Landesman, A.~A. Swaminathan, J.~Tao, and Y.~Xu.
\newblock Hyperelliptic curves with maximal galois action on the torsion points
  of their {J}acobians.
\newblock {\em arXiv preprint arXiv:1705.08777v2}, 2017.

\bibitem[O'M78]{omeara1978symplectic}
O.~T. O'Meara.
\newblock {\em Symplectic groups}, volume~16 of {\em Mathematical Surveys}.
\newblock American Mathematical Society, Providence, R.I., 1978.

\bibitem[Ser72]{serre:open-image-theorem}
J.-P. Serre.
\newblock Propri\'et\'es galoisiennes des points d'ordre fini des courbes
  elliptiques.
\newblock {\em Invent. Math.}, 15(4):259--331, 1972.

\bibitem[Ser00]{serrelacs}
J.-P. Serre.
\newblock {\em {\OE}uvres. {C}ollected papers. {IV}}.
\newblock Springer-Verlag, Berlin, 2000.
\newblock {L}ettre \`{a} {M}arie-{F}rance {V}ign\'{e}ras du 10/2/1986.

\bibitem[Vas03]{vasiu2003surjectivity}
A.~Vasiu.
\newblock Surjectivity criteria for {$p$}-adic representations. {I}.
\newblock {\em Manuscripta Math.}, 112(3):325--355, 2003.

\bibitem[Wei96]{weigel:on-the-profinite-completion-of-arithmetic-groups-of-split-type}
T.~Weigel.
\newblock On the profinite completion of arithmetic groups of split type.
\newblock In {\em Lois d'alg\`ebres et vari\'et\'es alg\'ebriques ({C}olmar,
  1991)}, volume~50 of {\em Travaux en Cours}, pages 79--101. Hermann, Paris,
  1996.

\bibitem[{Zyw}15]{zywina2015example}
D.~{Zywina}.
\newblock {An explicit Jacobian of dimension 3 with maximal Galois action}.
\newblock {\em arXiv:1508.07655v1}, August 2015.

\end{thebibliography}

\end{document}